\def\dim{\mathop{\rm dim}\nolimits}
\def\esssup{\mathop{\rm ess\,sup}\nolimits}
\def\e{\mathop{\rm e}\nolimits}
\def\spn{\mathop{\rm span}\nolimits}
\def\D{\,\mathrm d}
\def\Z{\mathbb Z}
\newcommand{\N}{\mathbb{N}}
\newcommand{\ie}{i.e.}
\newtheorem{theorem}{Theorem}[section]
\newtheorem{remark}{Remark}[section]
\numberwithin{equation}{section}
\title[Shift-invariant subspaces of Sobolev type]{Shift-invariant subspaces of Sobolev type}
\author[A. Aksentijevi\' c]{Aleksandar Aksentijevi\' c}
\address{Department of Mathematics and Informatics,
Faculty of Science,
University of Kragujevac,
Radoja Domanovi\'ca 12,
34000  Kragujevac,
Serbia}
\email{aksentijevic@kg.ac.rs}
\author[S. Aleksi\' c]{Suzana Aleksi\' c}
\address{Department of Mathematics and Informatics,
Faculty of Science,
University of Kragujevac,
Radoja Domanovi\'ca 12,
34000  Kragujevac,
Serbia}
\email{suzana.aleksic@pmf.kg.ac.rs}
\begin{document}
\date{}
\maketitle

\noindent
{\bf Abstract.} This paper has the characteristics of a review paper in which results of shift-invariant subspaces of Sobolev type are summarized without proofs.  The structure of shift-invariant spaces $V_s$, $s\in\mathbb{R}$,  generated by at most countable family of generators, which are subspaces  of Sobolev spaces  $H^s(\mathbb{R}^n)$, are announced in \cite{aap} and Bessel sequences, frames and Riesz families of such spaces are characterized. With the Fourier multiplier $\left(1-\frac{\Delta}{4\pi^2}\right)^{s/2}f=\mathcal{F}^{-1}\big((1+|t|^2)^{s/2}\widehat{f}(t)\big)$, we are able to extend notions and theorems in \cite{MB} to spaces of the Sobolev type.

\medskip
\noindent
{\bf Key Words and Phrases}: shift-invariant space; Bessel sequence; frame; Riesz family; range operator; shift-preserving operator; dual frame.
\medskip

\noindent
{\bf 2000 Mathematics Subject Classification}: 42C15, 42B30

 \section{Introduction}
\indent
Ron and Shen in \cite{RS} gave the characterization of frames and Riesz
families of shift-invariant systems using the Gramian and the dual Gramian matrices. Involving the range function approach, Bownik in \cite{MB} derived a simplified approach to the analysis of frames and Riesz families. The characterization of frames and Riesz families of the form $E(\mathcal{A}_I)=\{f(\cdot-k):f\in\mathcal{A}_I,\, k\in\Z^n,\, \mathcal{A}_I\subset L^2\}$, where $I\subseteq\N$, for shift-invariant subspaces of $L^2(\mathbb{R}^n)$ is given in terms of their behavior on subspaces of $\ell^2(\mathbb{Z}^n)$ and the range function.

The results of Bownik and Aguilera, papers \cite{MB} and \cite{ACCP1, ACCP2}, were generalized to shift-invariant spaces of the Sobolev type (see \cite{aap}, \cite{aap1}).

We will denote by $\mathbb{N}$, $\mathbb{Z}$, $\mathbb{R}$ the sets of positive integers, integers, real
numbers, respectively; $\mathbb{N}_0=\mathbb{N}\cup\{0\}$, $\mathbb{N}^n_0=(\mathbb{N}\cup\{0\})^n$, $\mathbb{T}^n=\left[-\frac{1}{2},\frac{1}{2}\right)^n$ where $n\in\mathbb{N}$. The standard multi-index notation will be used: $|x|=\sqrt{x_1^2+\cdots+x_n^2}$, $\langle x,y\rangle=\sum_{k=1}^nx_ky_k$, $x,y\in\mathbb{R}^n$.

A function $f$ belongs to $L^p_s=L^p_s(\mathbb{R}^n)$, $p\geqslant1$, with weight function $\mu_s(\cdot)=(1+|\cdot|^2)^{s/2}$
if $f\mu_s$ belongs to $L^p=L^p(\mathbb{R}^n)$. Equipped with the norm $\|f\|_{L^p_s}=\|f\mu_s\|_{L^p}$, the space $L^p_s$
is a
Banach space and the inner product is given in the usual way. For $s=0$,
the usual $L^p$ space is obtained. We also consider the weighted sequence spaces $\ell^2_s=\ell^2_s(\mathbb{Z}^n)$: a sequence
$c=(c_k)_{k\in\mathbb{Z}^n}$ belongs to $\ell^2_s$ if $(c\mu_s)_{k\in\mathbb{Z}^n}=\left(c_k\mu_s(k)\right)_{k\in\mathbb{Z}^n}$ belongs to $\ell^2$ with norm $\|c\|_{\ell^2_s}=\|c\mu_s\|_{\ell^2}$, where $(\mu_s(k))_{k\in\mathbb{Z}^n}$ is the restriction of $\mu_s$ to $\mathbb{Z}^n$. Equipped with the inner product
    $\langle (c_k)_{k\in\mathbb{Z}^n},(d_k)_{k\in\mathbb{Z}^n}\rangle_{\ell_s^2}=\sum_{k\in\mathbb{Z}^n}c_k\overline{d}_k\mu_s^2(k)$, the space $\ell_s^2$ is a Hilbert space.

The Fourier transform and the inverse Fourier transform of an integrable function $f$ are defined by
$\mathcal{F}f(t)=\widehat{f}(t)=\int_{\mathbb{R}^n}f(x)\e^{-2\pi \sqrt{-1}\langle x,t\rangle}\D x$  and $\mathcal F^{-1}f(t)=\widehat f(-t)$, $t\in\mathbb{R}^n$, respectively. The translation of a function $f$ for $k\in\mathbb{Z}^n$ is denoted by $T_kf(x)=f(x-k)$.

\section{Preliminaries}
The paper \cite{aap} provides a detailed exposition of results that are obtained adapting results of \cite{MB} to Sobolev spaces.

A space $W$ is called shift-invariant (SI for short) if\ $f\in W$ implies $T_kf(\cdot)=f(\cdot-k)\in W$, for any $k\in \mathbb{Z}^n$. Sobolev space $H^s:=H^s(\mathbb{R}^n)$, $s\in\mathbb{R}$, consists of all $f\in\mathcal{S}^\prime(\mathbb{R}^n)$ for which $\|\widehat{f}\|_{L^2_s}<+\infty$. Note, $L_s^2=\mathcal{F}(H^s)$  and
 $\langle f,g\rangle_{H^s}=\int_{\mathbb{R}^n}\widehat{f}(t)\overline{\widehat{{g}}}(t)\mu_s^2(t)\D t$.
 The space $H\left(\mathbb{T}^n,\ell^2_{s}\right)$ of all vector-valued measurable functions $F:\mathbb{T}^n\to\ell_s^2$ such that $\int_{\mathbb{T}^n}\|F(t)\|_{\ell_s^2}^2\D t<+\infty$, with scalar product
 $\langle F_1,F_2\rangle_{H(\mathbb{T}^n,\ell_s^2)}=\int_{\mathbb{T}^n}\langle F_1(t),F_2(t)\rangle_{\ell_s^2}\D t$ is a Hilbert space. For $s=0,$ it is denoted as $L^2\left(\mathbb{T}^n,\ell^2\right).$

 The space $S_s(\mathcal{A}_{I,s})=\overline{\spn}(E_s(\mathcal{A}_{I,s}))$ is called a principal shift-invariant (PSI for short) space if $\mathcal{A}_{I,s}=\{\varphi\}$, $\varphi\in H^s$, and a finitely generated shift-invariant (FSI for short) space if $\mathcal{A}_{I,s}=\{\varphi_1, \ldots,\varphi_r\}$, $\varphi_i\in H^s$, $i=1,\ldots,r$.

For $f\in H^s$ and $g\in L^2$ such that $f=(1-\frac{\Delta}{4\pi^2})^{-s/2}g$, the diagram
  \begin{align*}&L^2\quad \xrightarrow{\mathcal{T}}\quad L^2(\mathbb{T}^n,\ell^2)\\
  &\downarrow \tau_s\hspace*{2cm}\downarrow \nu_s\\
  &H^s \quad\xrightarrow{\mathcal{T}_s} \quad H(\mathbb{T}^n,\ell_s^2),
  \end{align*} commutes, where $\tau_s\big(g\big)=f$ and $\nu_s\big(\big(\widehat{g}(\cdot+k)\big)_{k\in\mathbb{Z}^n}\big)=\big(\frac{\widehat{g}(\cdot+k)}{\mu_s(k)}\big)_{k\in\mathbb{Z}^n}$, for all $s\in\mathbb{R}$.

A mapping $J_s:\mathbb{T}^n\to\big\{\text{closed subspaces of }\,\ell_s^2\big\}$ is called a range function.
The space $M_{J_s}=\{F\in H(\mathbb{T}^n,\ell_{s}^2):F(t)\in J_s(t)$ for a.e.\ $t\in\mathbb{T}^n\}$ is a closed subspace of $H(\mathbb{T}^n,\ell_s^2)$.
There is one-to-one correspondence between a closed SI space $V_s=\big\{\varphi\in H^s\,:\,\mathcal{T}_s \varphi(t)\in{J_s}(t) \text{ for a.e. }t\in\mathbb{T}^n\big\}$ and a measurable range function $J_s$. This idea first appears in Helson's book \cite{HH}.

 \begin{theorem} \label{V} For every closed SI space $V_s\subset H^s$ there exists a measurable range function $J_s$ such that $\mathcal{T}_sV_s=M_{J_s}$, and vice versa. In particular, for $V_s=S_s(\mathcal{A}_{I,s})$,
$J_s(t)=\overline{\spn}\{\mathcal{T}_s\varphi(t):\varphi\in\mathcal{A}_{I,s}\}$.
 \end{theorem}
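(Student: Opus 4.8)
The plan is to transport the classical ($s=0$) correspondence of \cite{MB} to the Sobolev scale by means of the commuting diagram, exploiting that both vertical arrows $\tau_s$ and $\nu_s$ are isometric isomorphisms which act \emph{fiberwise constantly}. First I would record the structural properties of these maps. The operator $\tau_s=(1-\frac{\Delta}{4\pi^2})^{-s/2}$ is a Fourier multiplier, hence an isometric isomorphism $L^2\to H^s$ that commutes with every translation $T_k$, $k\in\Z^n$. Consequently $V_s\subset H^s$ is a closed SI space if and only if $V:=\tau_s^{-1}V_s\subset L^2$ is a closed SI space: closedness is preserved because $\tau_s$ is a homeomorphism, and shift-invariance because $\tau_s T_k=T_k\tau_s$. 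The map $\nu_s$ is induced by the fixed diagonal isometry $U:\ell^2\to\ell_s^2$, $U(c)_k=c_k/\mu_s(k)$ (which satisfies $\|Uc\|_{\ell_s^2}=\|c\|_{\ell^2}$), acting pointwise by $(\nu_sF)(t)=U(F(t))$. Since $U$ is a $t$-independent isometric isomorphism, $\nu_s$ is an isometric isomorphism $L^2(\mathbb{T}^n,\ell^2)\to H(\mathbb{T}^n,\ell_s^2)$, and for any range function $J$ one has $\nu_sM_J=M_{U(J(\cdot))}$. Moreover $t\mapsto J(t)$ is measurable if and only if $t\mapsto U(J(t))$ is, because $P_{U(J(t))}=UP_{J(t)}U^{-1}$ and $U$ is fixed.

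Next I would invoke the $s=0$ theorem of \cite{MB}: to the closed SI space $V\subset L^2$ there corresponds a measurable range function $J:\mathbb{T}^n\to\{\text{closed subspaces of }\ell^2\}$ with $\mathcal{T}V=M_J$, and when $V=\overline{\spn}(E(\mathcal{A}_I))$ is generated by $\mathcal{A}_I$ one has $J(t)=\overline{\spn}\{\mathcal{T}\psi(t):\psi\in\mathcal{A}_I\}$. I then set $J_s(t):=U(J(t))$, which by the previous paragraph is a measurable range function into closed subspaces of $\ell_s^2$. Using commutativity $\mathcal{T}_s\tau_s=\nu_s\mathcal{T}$ I compute
\[\mathcal{T}_sV_s=\mathcal{T}_s\tau_sV=\nu_s\mathcal{T}V=\nu_sM_J=M_{J_s},\]
which proves the forward direction. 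For the converse, starting from a measurable $J_s$ I set $J(t):=U^{-1}(J_s(t))$, let $V:=\mathcal{T}^{-1}M_J$ (closed and SI by the $s=0$ theorem) and $V_s:=\tau_sV$; the same computation gives $\mathcal{T}_sV_s=M_{J_s}$, and $V_s$ is closed and SI by the first paragraph.

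Finally, for the ``in particular'' clause with $V_s=S_s(\mathcal{A}_{I,s})$, note that $\mathcal{A}_I:=\tau_s^{-1}\mathcal{A}_{I,s}$ generates $V=\tau_s^{-1}V_s$ and that, fiberwise, $\mathcal{T}_s\varphi(t)=U(\mathcal{T}(\tau_s^{-1}\varphi)(t))$. Since $U$ is a bounded linear isomorphism it intertwines closed spans, so
\[J_s(t)=U(J(t))=U\big(\overline{\spn}\{\mathcal{T}(\tau_s^{-1}\varphi)(t):\varphi\in\mathcal{A}_{I,s}\}\big)=\overline{\spn}\{\mathcal{T}_s\varphi(t):\varphi\in\mathcal{A}_{I,s}\},\]
as claimed. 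The only genuinely delicate point is the handling of measurability in the first step; it is tamed precisely because $U$ does not depend on $t$, so the measurability of the $\ell_s^2$-valued range function reduces verbatim to that of the $\ell^2$-valued one supplied by \cite{MB}. Verifying the commutativity of the diagram (if it is not taken as already established) and that $\tau_s$ preserves both closedness and shift-invariance are the remaining routine checks.
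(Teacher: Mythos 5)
Your proposal is correct and takes essentially the approach the paper intends: the paper states Theorem \ref{V} without proof (deferring to \cite{aap}), but its announced method --- the multiplier $\tau_s=\left(1-\frac{\Delta}{4\pi^2}\right)^{-s/2}$ and the map $\nu_s$ in the commuting diagram --- is precisely the transference to Bownik's $L^2$ correspondence \cite{MB} that you carry out. Your key observation, that $\nu_s$ acts fiberwise by the $t$-independent unitary $U:\ell^2\to\ell_s^2$, $(Uc)_k=c_k/\mu_s(k)$, so that closedness and measurability of the range function (via $P_{U(J(t))}=UP_{J(t)}U^{-1}$) transfer verbatim, is exactly the point that makes the reduction work, and you handle it correctly.
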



Some references for frame theory are \cite{OC} and \cite{KG}. Since the theory in the $L^2$-case is enough complex, we carefully analyze the range function $J_s$ acting on spaces $V_s=S_s(\mathcal{A}_{I,s}).$

Shifts of a given set of functions $\mathcal{A}_{I,s}$ form a frame (a Riesz family)
for $S_s(\mathcal{A}_{I,s})$ precisely when these functions form a frame (a Riesz family) with
uniform constants on the fibers over the base space $\mathbb{T}^n$ in the Fourier domain.
  \begin{theorem}[\cite{aap}]\label{frame} $E_s(\mathcal{A}_{I,s})$ is a frame or  a Riesz basis for $S_s(\mathcal{A}_{I,s})$ with bounds $A$, $B$ or a Bessel family with bound $B$ for every $s\in\mathbb{R}$ $($equivalently, for some $s\in\mathbb{R})$ if and only if $\{\mathcal{T}_s\varphi(t):\varphi\in\mathcal{A}_{I,s}\}\subset\ell_s^2$ is a frame or  a Riesz basis for $J_s(t)$ with bounds $A$, $B$ or a Bessel family with bound $B$ for a.e.\ $t\in\mathbb{T}^n$, for every $s\in\mathbb{R}$ $($equivalently for some $s\in\mathbb{R})$,  respectively. Moreover, $E_s(\mathcal{A}_{I,s})$ is a fundamental frame for every $s\in\mathbb{R}$ if and only if $\{\mathcal{T}_s\varphi(t):\varphi\in\mathcal{A}_{I,s}\}\subset\ell_s^2$ is a fundamental frame for a.e.\ $t\in\mathbb{T}^n$, for every $s\in\mathbb{R}$.
  \end{theorem}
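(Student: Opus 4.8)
The plan is to prove Theorem \ref{frame} by reducing it to the corresponding $L^2$ statement of Bownik \cite{MB} through the commutative diagram, exploiting that all three maps involved are (fibre-wise) isometric isomorphisms that preserve the relevant bounds. First I would record the two structural facts about $\tau_s$ and $\nu_s$. On the Fourier side $\tau_s^{-1}$ is multiplication by $\mu_s$, so $\|\tau_s g\|_{H^s}=\|g\|_{L^2}$, and since $\widehat{T_kf}(t)=\e^{-2\pi\sqrt{-1}\langle k,t\rangle}\widehat f(t)$ one checks $\tau_s^{-1}T_k=T_k\tau_s^{-1}$; thus $\tau_s\colon L^2\to H^s$ is an isometric isomorphism intertwining the shifts. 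Likewise, on each fibre $\nu_s$ is the diagonal map $(c_k)_k\mapsto(c_k/\mu_s(k))_k$, and the identity $\|\nu_s c\|_{\ell^2_s}^2=\sum_k|c_k/\mu_s(k)|^2\mu_s^2(k)=\|c\|_{\ell^2}^2$ shows $\nu_s$ is a fibre-wise isometric isomorphism of $\ell^2$ onto $\ell^2_s$. Consequently $\mathcal{T}_s=\nu_s\mathcal{T}\tau_s^{-1}$ is an isometric isomorphism of $H^s$ onto $H(\mathbb{T}^n,\ell^2_s)$.

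The computational heart, which I would isolate as a lemma, is the fibre formula for the analysis coefficients. Writing the $H^s$ inner product on the Fourier side and periodizing $\mathbb{R}^n=\bigcup_{m\in\Z^n}(\mathbb{T}^n+m)$, the number $\langle f,T_k\varphi\rangle_{H^s}$ is the $k$-th Fourier coefficient of the periodic function $t\mapsto\sum_{m\in\Z^n}\widehat f(t+m)\overline{\widehat\varphi(t+m)}\mu_s^2(t+m)$, which equals $\langle\mathcal{T}_sf(t),\mathcal{T}_s\varphi(t)\rangle_{\ell^2_s}$ by the explicit form of $\mathcal{T}_s$ and the definition of the $\ell^2_s$ inner product. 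Parseval on $L^2(\mathbb{T}^n)$ together with Tonelli then give
\[\sum_{\varphi\in\mathcal{A}_{I,s}}\sum_{k\in\Z^n}|\langle f,T_k\varphi\rangle_{H^s}|^2=\int_{\mathbb{T}^n}\sum_{\varphi\in\mathcal{A}_{I,s}}|\langle\mathcal{T}_sf(t),\mathcal{T}_s\varphi(t)\rangle_{\ell^2_s}|^2\D t.\]
Combined with the isometry $\|f\|_{H^s}^2=\int_{\mathbb{T}^n}\|\mathcal{T}_sf(t)\|_{\ell^2_s}^2\D t$, this converts the global frame inequality for $E_s(\mathcal{A}_{I,s})$ into an integrated version of the fibre frame inequality for $\{\mathcal{T}_s\varphi(t)\}_{\varphi}$ on $J_s(t)$.

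With these identities the equivalence follows in both directions. The backward implication is immediate: integrating the a.e.\ fibre inequalities (valid on $J_s(t)$, and noting $\mathcal{T}_sf(t)\in J_s(t)$ for $f\in V_s$ by Theorem \ref{V}) yields the global bounds with the same $A,B$. The forward implication is the main obstacle, since one must pass from a single integrated inequality to pointwise bounds on almost every fibre with the \emph{same} constants; this localization I would handle by invoking Bownik's $L^2$ result through the diagram. Indeed, $\tau_s$ carries $E_s(\mathcal{A}_{I,s})$ isometrically and shift-preservingly to the $L^2$ system $E(\tau_s^{-1}\mathcal{A}_{I,s})$, so $E_s(\mathcal{A}_{I,s})$ is a frame (Riesz basis, Bessel family) with bounds $A,B$ for $S_s(\mathcal{A}_{I,s})$ if and only if $E(\tau_s^{-1}\mathcal{A}_{I,s})$ is one with the same bounds for $S(\tau_s^{-1}\mathcal{A}_{I,s})$; Bownik's theorem translates this into the fibre condition for $\{\mathcal{T}(\tau_s^{-1}\varphi)(t)\}$ on $J(t)$ in $\ell^2$; and the fibre-wise isometric isomorphism $\nu_s$, which by Theorem \ref{V} maps $J(t)$ onto $J_s(t)$ and $\mathcal{T}(\tau_s^{-1}\varphi)(t)$ onto $\mathcal{T}_s\varphi(t)$, transfers it to the asserted fibre condition in $\ell^2_s$ with unchanged bounds. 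The Riesz and Bessel cases are identical, as the lower and upper coefficient inequalities are preserved by isometries, and the fundamental-frame statement is the special case $J_s(t)=\ell^2_s$ a.e. Finally, since every step preserves the bounds for each fixed $s$ and the diagram commutes for all $s\in\R$, the condition is independent of the chosen $s$, which yields the ``for every $s$ (equivalently, for some $s$)'' clause.
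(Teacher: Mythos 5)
Your proposal is correct and follows essentially the same route as the paper: this survey states Theorem \ref{frame} without proof (citing \cite{aap}), but its announced method is exactly your reduction, namely transferring Bownik's $L^2$ theorem through the commutative diagram via the isometric isomorphisms $\tau_s$, $\nu_s$, $\mathcal{T}_s$, with the fibre identity $\langle\mathcal{T}_sf(t),\mathcal{T}_s\varphi(t)\rangle_{\ell_s^2}=\sum_{m\in\Z^n}\widehat f(t+m)\overline{\widehat\varphi(t+m)}\mu_s^2(t+m)$ doing the computational work. Your handling of the localization step (outsourcing it to Bownik rather than reproving it) and of the ``for every $s$ (equivalently, some $s$)'' clause via the $s$-independence of the transferred $L^2$ condition is exactly the intended argument.
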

  The dimension function $\dim_{V_s}:\mathbb{T}^n\to\mathbb{N}_0\cup\{+\infty\}$ of $V_s$ defined by $\dim_{V_s}(t)=\dim J_s(t)$, where $V_s=\mathcal{T}_s^{-1}M_{J_s}$ and the spectrum of $V_s$ given by $\sigma_{V_s}=\big\{t\in\mathbb{T}^n:J_s(t)\neq\{{\bf0}\}\big\}$ are the main tools for the decomposition theorem.
  We say that $\varphi_0$ is a quasi-orthogonal generator of $S_s(\varphi)$ if and only if $\|\mathcal{T}_s\varphi_0(t)\|_{\ell_s^2}=\mathbf{1}_{\sigma_{S_s(\varphi)}}(t)$ for a.e. $t\in\mathbb{T}^n$, (see \cite{aap} for more details).

De Boor et al. in \cite{BDR} proved the decomposition theorem for FSI space into quasi-regular spaces and Bownik in \cite{MB} gave the decomposition of a SI subspace of $L^2(\mathbb{R}^n)$ into an orthogonal sum of PSI spaces each of which is generated by a quasi-orthogonal generator. The method of proof carries over to domain of SI spaces of Sobolev type.

The proof of the following assertion will be given in a separate contribution, \cite{aap1}.
   \begin{theorem} \label{D} Every SI space $V_s\subset H^s$ can be decomposed as an orthogonal sum
   of PSI space $V^i_s$, $i\in\mathbb{N}$, with quasi-orthogonal generators $\varphi_i$, $i\in\mathbb{N}$,
    and $\sigma_{V_s^{i+1}}\subset\sigma_{V_s^i}$, for all $i\in\mathbb{N}$. Moreover,  $\dim_{V_s}(t)=\sum_{i\in\mathbb{N}}\|\mathcal{T}_s\varphi_i(t)\|_{\ell_s^2}$, for a.e. $t\in\mathbb{T}^n.$
   \end{theorem}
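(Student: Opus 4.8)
The plan is to transfer the problem to the fibre side via Theorem~\ref{V} and to build the generators by selecting, measurably in $t$, an orthonormal basis of each fibre $J_s(t)\subset\ell_s^2$. By Theorem~\ref{V} the closed SI space $V_s$ corresponds to a measurable range function $J_s$ with $\mathcal{T}_sV_s=M_{J_s}$, and since $\mathcal{T}_s\colon H^s\to H(\mathbb{T}^n,\ell_s^2)$ is an isometric isomorphism it suffices to produce measurable sections $t\mapsto e_i(t)\in\ell_s^2$ with $e_i(t)\in J_s(t)$ a.e., and then put $\varphi_i=\mathcal{T}_s^{-1}e_i$ and $V_s^i=S_s(\varphi_i)$. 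Writing $d(t)=\dim_{V_s}(t)=\dim J_s(t)$, which is measurable with values in $\mathbb{N}_0\cup\{+\infty\}$, I would first stratify $\mathbb{T}^n=\bigsqcup_{m\in\mathbb{N}_0\cup\{+\infty\}}\Omega_m$ with $\Omega_m=\{t:d(t)=m\}$.

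On this stratification I would fix a countable family of measurable sections whose closed span is $J_s(t)$ for a.e.\ $t$; for $V_s=S_s(\mathcal{A}_{I,s})$ the sections $t\mapsto\mathcal{T}_s\varphi(t)$, $\varphi\in\mathcal{A}_{I,s}$, already do this by the last part of Theorem~\ref{V}. Running the Gram--Schmidt process fibrewise in the weighted inner product of $\ell_s^2$, discarding at each $t$ the vectors that orthonormalize to zero, and reindexing, produces measurable unit sections $e_1(t),e_2(t),\dots$ forming an orthonormal basis of $J_s(t)$; the number of nonzero vectors at $t$ is exactly $d(t)$, and the reindexing is uniform on each stratum $\Omega_m$. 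Extending each $e_i$ by zero on $\{t:d(t)<i\}$ gives global measurable sections with $e_i(t)\in J_s(t)$ and $\|e_i(t)\|_{\ell_s^2}=\mathbf{1}_{\{d\geqslant i\}}(t)$ for a.e.\ $t$.

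With $\varphi_i=\mathcal{T}_s^{-1}e_i\in V_s$ and $V_s^i=S_s(\varphi_i)$ I would then read the fibrewise facts back through Theorem~\ref{V}. The range function of $V_s^i$ is $t\mapsto\overline{\spn}\{e_i(t)\}$, so the orthogonal fibre decomposition $J_s(t)=\bigoplus_i\overline{\spn}\{e_i(t)\}$ turns, under the isometry $\mathcal{T}_s$, into the orthogonal sum $V_s=\bigoplus_{i\in\mathbb{N}}V_s^i$. The identity $\|\mathcal{T}_s\varphi_i(t)\|_{\ell_s^2}=\mathbf{1}_{\{d\geqslant i\}}(t)$ is precisely the statement that $\varphi_i$ is a quasi-orthogonal generator of $V_s^i$ and shows $\sigma_{V_s^i}=\{t:d(t)\geqslant i\}$; the inclusions $\{d\geqslant i+1\}\subset\{d\geqslant i\}$ then give $\sigma_{V_s^{i+1}}\subset\sigma_{V_s^i}$. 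Summing the indicators yields $\sum_{i\in\mathbb{N}}\|\mathcal{T}_s\varphi_i(t)\|_{\ell_s^2}=\sum_{i\in\mathbb{N}}\mathbf{1}_{\{d\geqslant i\}}(t)=d(t)=\dim_{V_s}(t)$ for a.e.\ $t$, which is the last assertion.

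The hard part will be the measurable selection in the second step: one must verify that the fibrewise Gram--Schmidt output is measurable in $t$ and that discarding and reindexing across the strata $\Omega_m$ yields genuine global sections with exactly $d(t)$ nonzero entries. This is the standard technical point in the theory of measurable fields of Hilbert spaces, and the only Sobolev-specific feature is that the fibre carries the weighted inner product of $\ell_s^2$; since $\ell_s^2$ is separable and unitarily equivalent to $\ell^2$, the selection argument of \cite{MB} carries over. A cleaner alternative, which avoids repeating the selection, is to transport Bownik's $L^2$-decomposition through the commuting diagram: the multiplier $(1-\tfrac{\Delta}{4\pi^2})^{-s/2}$ intertwines translations, so $\tau_s$ is a shift-preserving isometry carrying SI spaces to SI spaces, while the fibre map $\nu_s$ is unitary from $\ell^2$ onto $\ell_s^2$ (indeed $\|\nu_s c\|_{\ell_s^2}=\|c\|_{\ell^2}$); hence orthonormal fibre bases and the quasi-orthogonal normalization are preserved, and Theorem~\ref{D} follows directly from its $L^2$ counterpart.
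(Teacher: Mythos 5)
Your proposal is correct, but a comparison with ``the paper's proof'' is necessarily indirect: the paper states Theorem~\ref{D} without proof, deferring it to \cite{aap1} and remarking only that the method of Bownik's $L^2$ decomposition in \cite{MB} carries over to the Sobolev setting. Your first route --- countable generating sections (for a general closed SI space you should say explicitly that separability of $H^s$ supplies a countable dense set, so $V_s=S_s(\mathcal{A}_{I,s})$ with $I$ countable, whence Theorem~\ref{V} gives spanning sections), fibrewise Gram--Schmidt with measurable discarding and reindexing over the strata $\{d=m\}$, and transfer back through the isometric isomorphism $\mathcal{T}_s$ --- is exactly the adaptation the paper gestures at, and it is sound: at each Gram--Schmidt step one projects onto a finite-dimensional, hence closed, span, so the number of survivors at $t$ is exactly $d(t)$, giving $\|\mathcal{T}_s\varphi_i(t)\|_{\ell_s^2}=\mathbf{1}_{\{d\geqslant i\}}(t)$, the nested spectra, and the dimension formula by summing indicators. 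Your second route is genuinely different from what the paper indicates and is arguably preferable for a paper whose stated strategy is reduction to \cite{MB}: since $\tau_s$ is a Fourier multiplier it commutes with integer shifts and is an isometric isomorphism of $L^2$ onto $H^s$, and the fibre map $\nu_s\colon c\mapsto\big(c_k/\mu_s(k)\big)_{k\in\mathbb{Z}^n}$ is a \emph{fixed} unitary of $\ell^2$ onto $\ell_s^2$ --- crucially this works because the paper's $\mathcal{T}_s$ weights by $\mu_s(k)$ rather than $\mu_s(t+k)$ --- so quasi-orthogonality ($\ell^2$-norm one on the spectrum), orthogonality of fibres, spectra and dimension functions all transport verbatim from Bownik's theorem, and no measurable-selection argument needs to be repeated. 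What route one buys is self-containedness (a proof from scratch in $H^s$); what route two buys is brevity and immunity from the selection technicalities, at the cost of invoking the $L^2$ theorem as a black box. I see no genuine gap in either.
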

 As an application of the previous theorem, shift-preserving operators can characterized using range operators and some properties of the dimension function can be proved.

\section{Main results}

In \cite{MB} (based on \cite{BDR}, \cite{BDR2}, \cite{HH}, \cite{RS}), shift-preserving operators on $L^2(\mathbb{R}^n)$ are defined and their properties are derived. Also, the range operator on $L^2(\mathbb{R}^n)$ is introduced and important connections between range operators and shift-preserving operators are given. Aguilera and collaborators continued to investigate properties of the range function, range operators and shift-preserving operators \cite{ACCP1,ACCP2}.

 The definition, assertions and proofs are adapted to our setting and spaces.

A bounded linear operator $L:V_s\to H^s$ is shift-preserving if $LT_k=T_kL$ for all $k\in\mathbb{Z}^n$.  A range operator on $J_s$ (with values in $\ell_s^2$) is a mapping $R_s$ which maps $\mathbb{T}^n$ to a set of bounded operators defined on closed subspaces of $\ell_s^2$, such that the domain of $R_s(t)$ equals $J_s(t)$ for a.e.\ $t\in\mathbb{T}^n$.

The correspondence between the shift-preserving operator and the corresponding range operator is one-to-one under the convention that the range operators are identified if they are equal almost everywhere.

   \begin{theorem} For a shift-preserving operator $L:V_s\to H^s$ there exists a measurable range operator $R_s$ on $J_s$ so that
   \begin{equation}\label{3.6}(\mathcal{T}_s L)f(t)=R_s(t)(\mathcal{T}_s f(t))\quad\text{for a.e. }t\in\mathbb{T}^n,\, f\in V_s. \end{equation}
  Also, for a measurable range operator $R_s$ on $J_s$ such that \[\esssup\limits_{t\in\mathbb{T}^n}\|R_s(t)\|<+\infty,\] there exists a shift-preserving operator $L:V_s\to H^s$ that $\eqref{3.6}$ is satisfied. Moreover, $L$ is an isometry if and only if $R_s(t)$ is an isometry, for a.e.\ $t\in\mathbb{T}^n$.
   \end{theorem}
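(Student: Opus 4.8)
The plan is to leverage that $\mathcal{T}_s\colon H^s\to H(\mathbb{T}^n,\ell_s^2)$ is an isometric isomorphism which intertwines translation with modulation: writing $e_k(t)=\e^{-2\pi\sqrt{-1}\langle k,t\rangle}$, one has $\mathcal{T}_s(T_kf)(t)=e_k(t)\,\mathcal{T}_s f(t)$ for a.e.\ $t$. This is immediate from the commuting diagram, since $T_k$ commutes with $\tau_s$ (the Fourier multiplier $\mu_s^{-1}$ commutes with multiplication by $e_k$) and $\mathcal{T}(T_kg)(t)=e_k(t)\,\mathcal{T}g(t)$ on $L^2$, and $\nu_s$ is fiberwise. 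Setting $\widetilde{L}=\mathcal{T}_s L\,\mathcal{T}_s^{-1}\colon M_{J_s}\to H(\mathbb{T}^n,\ell_s^2)$, the hypothesis $LT_k=T_kL$ becomes commutation of $\widetilde{L}$ with multiplication by every $e_k$. Since each $\phi\in L^\infty(\mathbb{T}^n)$ is a pointwise a.e.\ limit of trigonometric polynomials $p_n$ with $\|p_n\|_{L^\infty}\leqslant\|\phi\|_{L^\infty}$ (e.g.\ its Fej\'er means), dominated convergence in $H(\mathbb{T}^n,\ell_s^2)$ together with boundedness of $\widetilde{L}$ promotes this to the $L^\infty(\mathbb{T}^n)$-module identity $\widetilde{L}(\phi F)=\phi\,\widetilde{L}(F)$ for all $\phi\in L^\infty(\mathbb{T}^n)$, $F\in M_{J_s}$. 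This identity is the engine of the whole argument.

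For the forward direction I would fix a countable family $\{f_j\}\subset V_s$ for which $\{\mathcal{T}_s f_j(t)\}_j$ is dense in $J_s(t)$ for a.e.\ $t$ (such a family exists by separability and Theorem \ref{V}), and define $R_s(t)$ on these vectors by $R_s(t)\,\mathcal{T}_s f_j(t)=\mathcal{T}_s(Lf_j)(t)$, extended linearly over finite Gaussian-rational combinations. Well-definedness is exactly where the module identity enters: if $g=\sum_j c_j f_j\in V_s$ and $E=\{t:\mathcal{T}_s g(t)=\mathbf{0}\}$, then applying the homomorphism to $\mathbf{1}_{E^c}\mathcal{T}_s g=\mathcal{T}_s g$ gives $\mathcal{T}_s(Lg)=\mathbf{1}_{E^c}\mathcal{T}_s(Lg)$, so $\mathcal{T}_s(Lg)$ vanishes a.e.\ on $E$ and the prescription is independent of the representation. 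The same localization by indicators of small sets yields $\|R_s(t)\|\leqslant\|L\|$ for a.e.\ $t$, whence $R_s(t)$ extends continuously to all of $J_s(t)$; measurability of $R_s$ and the identity $\eqref{3.6}$ then follow from the construction on the dense measurable sections and a limiting argument.

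For the converse, given a measurable $R_s$ with $C:=\esssup_{t}\|R_s(t)\|<+\infty$, define $\widetilde{L}F(t)=R_s(t)F(t)$ on $M_{J_s}$; the estimate $\int_{\mathbb{T}^n}\|R_s(t)F(t)\|_{\ell_s^2}^2\D t\leqslant C^2\|F\|^2$ shows $\widetilde{L}$ is bounded into $H(\mathbb{T}^n,\ell_s^2)$, and measurability of $R_s$ guarantees that $\widetilde{L}F$ is measurable. Putting $L=\mathcal{T}_s^{-1}\widetilde{L}\,\mathcal{T}_s$, the fiberwise action makes $\widetilde{L}$ commute with each $M_{e_k}$, and transporting this through $\mathcal{T}_s$ gives $LT_k=T_kL$, so $L$ is shift-preserving and satisfies $\eqref{3.6}$ by construction. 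For the isometry claim, since $\mathcal{T}_s$ is isometric, $L$ is an isometry iff $\int_{\mathbb{T}^n}\|R_s(t)F(t)\|_{\ell_s^2}^2\D t=\int_{\mathbb{T}^n}\|F(t)\|_{\ell_s^2}^2\D t$ for all $F\in M_{J_s}$; the pointwise-to-integral implication is immediate, while for the reverse one tests against the fields $\mathbf{1}_A\mathcal{T}_s f_j$, so that any positive-measure failure of $\|R_s(t)a\|_{\ell_s^2}=\|a\|_{\ell_s^2}$ would break the integral identity, forcing $R_s(t)$ to be an isometry on the dense sections, hence on $J_s(t)$, for a.e.\ $t$.

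The main obstacle I expect is the forward construction of $R_s$ — establishing simultaneously its well-definedness, the uniform bound $\|R_s(t)\|\leqslant\|L\|$, and its measurability, all of which rest on the localization afforded by the $L^\infty(\mathbb{T}^n)$-module identity. The weighted nature of $\ell_s^2$ and $H(\mathbb{T}^n,\ell_s^2)$ does not affect the structure of the argument, because $\mathcal{T}_s$ remains an isometric isomorphism intertwining translations and modulations; indeed, should a reduction be preferred over this direct route, the commuting diagram transfers the entire statement to Bownik's $L^2$ result in \cite{MB} via $\tau_s$ and the fiberwise isometry $(a_k)_{k}\mapsto(a_k/\mu_s(k))_{k}$ induced by $\nu_s$, the range operator then being conjugated back to $\ell_s^2$.
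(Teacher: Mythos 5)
Your proposal is correct, and its engine---the $L^\infty(\mathbb{T}^n)$-module identity $\widetilde{L}(\phi F)=\phi\,\widetilde{L}F$ extracted from $LT_k=T_kL$ via trigonometric polynomials, Fej\'er means and dominated convergence, then exploited through localization by indicator functions---is precisely the mechanism behind the proof this paper relies on; note the paper itself states the theorem without proof (it is a review, with the argument adapted from Bownik \cite{MB} and deferred to \cite{aap1}). The one genuine divergence is the construction of $R_s(t)$ in the forward direction. The paper signals, immediately after Theorem \ref{D}, that the range-operator characterization is ``an application of the previous theorem'': following Bownik, one decomposes $V_s$ into PSI spaces with quasi-orthogonal generators $\varphi_i$, so that the nonzero fibers $\mathcal{T}_s\varphi_i(t)$ form an orthonormal basis of $J_s(t)$ for a.e.\ $t$, and one defines $R_s(t)\big(\mathcal{T}_s\varphi_i(t)\big)=\mathcal{T}_s(L\varphi_i)(t)$ on that basis; well-definedness is then automatic, and the bound $\|R_s(t)\|\leqslant\|L\|$ follows from the same indicator-localization you use. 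You instead take an arbitrary countable family $\{f_j\}$ whose fibers span $J_s(t)$ and extend over rational combinations, paying for it with the well-definedness argument via $E=\{t:\mathcal{T}_s g(t)=\mathbf{0}\}$---which you execute correctly, and which works provided (as you implicitly arrange) the exceptional null sets are chosen uniformly over the countable family. Your route buys independence from the decomposition theorem, whose proof the paper defers to \cite{aap1}; the paper's route buys an orthonormal fiber basis, which streamlines well-definedness, the norm estimate, and the subsequent dimension-function applications. Your closing remark---that the commuting diagram transfers the entire statement to Bownik's $L^2$ theorem by conjugating with $\tau_s$ (which commutes with translations) and the fiberwise isometry induced by $\nu_s$---is also sound, and is arguably the shortest correct path, though it is not the one the paper takes.
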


   \begin{theorem} Let $L:V_s\to V_s$ be a shift-preserving operator with corresponding range operator $R_s$ on a SI space $V_s\subset H^s$ with the associated range function $J_s$. The adjoint operator $L^*:V_s\to V_s$ with the corresponding range operator $R_s^*(t)=(R_s(t))^*$, for a.e.\ $t\in\mathbb{T}^n$, is also shift-preserving. Moreover, $L$ is self-adjoint and $\sigma(L)\subseteq[A,B]$ if and only if $R_s(t)$ is self-adjoint and $\sigma(R_s(t))\subseteq[A,B]$ for a.e.\ $t\in\mathbb{T}^n$, where $A$, $B\in\mathbb{R}$, $A\leqslant B$. Also, $L$ is unitary if and only if $R_s(t)$ is unitary operator for a.e.\ $t\in\mathbb{T}^n$.\end{theorem}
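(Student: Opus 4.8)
The backbone of the argument is that $\mathcal{T}_s$ is an isometric isomorphism of $H^s$ onto $H(\mathbb{T}^n,\ell_s^2)$ taking $V_s$ onto $M_{J_s}$, together with the fact from the preceding theorem that $L\mapsto R_s$ is a bijection (modulo a.e.\ equality) between shift-preserving operators and bounded measurable range operators. I will also use repeatedly that each translation $T_k$ is unitary on $H^s$, since $\widehat{T_kf}(t)=\e^{-2\pi\sqrt{-1}\langle k,t\rangle}\widehat{f}(t)$ preserves $\|\cdot\|_{H^s}$ and $T_k^*=T_{-k}$. To produce the adjoint statement, I would first observe that taking Hilbert-space adjoints in $LT_k=T_kL$ and using $T_k^*=T_{-k}$ yields $L^*T_{-k}=T_{-k}L^*$ for every $k$, so $L^*$ is again shift-preserving and hence carries a measurable range operator by the preceding theorem. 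To identify that range operator I would transport the defining relation \eqref{3.6} through the isometry $\mathcal{T}_s$:
\[
\langle Lf,g\rangle_{H^s}=\int_{\mathbb{T}^n}\langle R_s(t)\mathcal{T}_sf(t),\mathcal{T}_sg(t)\rangle_{\ell_s^2}\D t=\int_{\mathbb{T}^n}\langle\mathcal{T}_sf(t),(R_s(t))^*\mathcal{T}_sg(t)\rangle_{\ell_s^2}\D t.
\]
Since $\mathcal{T}_sf$ exhausts $M_{J_s}$ as $f$ runs over $V_s$, comparing this with $\langle f,L^*g\rangle_{H^s}$ forces $(\mathcal{T}_sL^*g)(t)=(R_s(t))^*\mathcal{T}_sg(t)$ a.e., so the range operator of $L^*$ is $R_s^*(t)=(R_s(t))^*$.

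Self-adjointness is then immediate from uniqueness in the correspondence: $L=L^*$ holds precisely when $R_s(t)=(R_s(t))^*$ for a.e.\ $t$. Under this standing self-adjointness I would read off the spectral bound from the quadratic-form description, namely $\sigma(L)\subseteq[A,B]$ iff $A\|f\|_{H^s}^2\leqslant\langle Lf,f\rangle_{H^s}\leqslant B\|f\|_{H^s}^2$ for all $f\in V_s$, and similarly $\sigma(R_s(t))\subseteq[A,B]$ iff $A\|v\|_{\ell_s^2}^2\leqslant\langle R_s(t)v,v\rangle_{\ell_s^2}\leqslant B\|v\|_{\ell_s^2}^2$ for all $v\in J_s(t)$. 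Transporting the global form through $\mathcal{T}_s$ rewrites the first bound as
\[
A\int_{\mathbb{T}^n}\|F(t)\|_{\ell_s^2}^2\D t\leqslant\int_{\mathbb{T}^n}\langle R_s(t)F(t),F(t)\rangle_{\ell_s^2}\D t\leqslant B\int_{\mathbb{T}^n}\|F(t)\|_{\ell_s^2}^2\D t
\]
for every $F\in M_{J_s}$, and the implication from fiberwise bounds to the global bound is mere integration.

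The converse, recovering the a.e.\ fiber bound from the single integrated inequality, is the main obstacle. I would resolve it by localization: inserting $F=\mathbf{1}_E\,v$ for an arbitrary measurable set $E\subseteq\mathbb{T}^n$ and a measurable selection $t\mapsto v(t)\in J_s(t)$ gives $\int_E\langle(R_s(t)-A)v(t),v(t)\rangle_{\ell_s^2}\D t\geqslant0$ for all such $E$, whence $\langle(R_s(t)-A)v(t),v(t)\rangle_{\ell_s^2}\geqslant0$ a.e.; letting $v$ run through a countable family of measurable selections whose values are dense in each $J_s(t)$—which exists because $J_s$ is a measurable range function in the sense of Theorem~\ref{V}—upgrades this to all $v\in J_s(t)$ for a.e.\ $t$, and the upper bound is handled identically. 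The delicate point is exactly this joint measurability, i.e.\ the availability of a measurable field of vectors spanning the fibers; this is where the measurable range-function machinery does the real work.

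Finally, for unitarity I would combine the above with the isometry criterion already recorded at the end of the preceding theorem. Because composition of shift-preserving operators corresponds to pointwise composition of range operators and the identity on $V_s$ corresponds to $\mathrm{id}_{J_s(t)}$, the relations $L^*L=I$ and $LL^*=I$ are equivalent, by the one-to-one correspondence, to $(R_s(t))^*R_s(t)=\mathrm{id}_{J_s(t)}$ and $R_s(t)(R_s(t))^*=\mathrm{id}_{J_s(t)}$ for a.e.\ $t$; equivalently, both $R_s(t)$ and $(R_s(t))^*$ are isometries, that is, $R_s(t)$ is unitary for a.e.\ $t\in\mathbb{T}^n$.
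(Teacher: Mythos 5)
Your proof is correct and rests on the same skeleton as the paper's: the isometric isomorphism $\mathcal{T}_s$, the bijective correspondence between shift-preserving operators and bounded measurable range operators, the duality computation $\langle Lf,g\rangle_{H^s}=\int_{\mathbb{T}^n}\langle \mathcal{T}_sf(t),(R_s(t))^*\mathcal{T}_sg(t)\rangle_{\ell_s^2}\D t$ to identify the range operator of $L^*$, and integration over $\mathbb{T}^n$ to pass from fiberwise to global quadratic-form bounds. The minor differences are these. For the adjoint, the paper goes in the opposite direction: it starts from the measurable, uniformly bounded field $R_s^*$ and invokes the existence half of the correspondence theorem to manufacture a shift-preserving $L^\bullet$ with range operator $R_s^*$, then shows $L^\bullet=L^*$ by the same duality computation; you instead observe directly that taking adjoints in $LT_k=T_kL$ with $T_k^*=T_{-k}$ makes $L^*$ shift-preserving and then identify its range operator---both routes are valid and essentially interchangeable. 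Where you genuinely add value is the direction $\sigma(L)\subseteq[A,B]\Rightarrow\sigma(R_s(t))\subseteq[A,B]$ a.e.: the paper's proof compresses this to ``and the assertion holds,'' whereas you supply the actual localization argument ($F=\mathbf{1}_E\,v$ for arbitrary measurable $E$ and measurable selections $v$, then a countable family of selections with fiberwise dense values, available since $J_s(t)=\overline{\spn}\{\mathcal{T}_s\varphi_i(t)\}$ for a countable generating family); this is exactly the step that needs care, and your treatment is the complete one. Finally, for unitarity the paper applies the just-proved spectral equivalence to $LL^*$ and $L^*L$ (spectrum $\{1\}$ for positive self-adjoint operators), while you argue through uniqueness in the correspondence plus the fact that composition of shift-preserving operators corresponds to pointwise composition of range operators---a fact you should note follows immediately from iterating the defining relation, using that $L(V_s)\subseteq V_s$ forces $R_s(t)\big(J_s(t)\big)\subseteq J_s(t)$ a.e.; with that observation in place, your version is equally rigorous and arguably cleaner.
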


    \begin{proof} It is obvious that $R_s^*$ is measurable and uniformly bounded on $J_s$. There exists a shift-preserving operator $L^\bullet:V_s\to H^s$ such that $(\mathcal{T}_s L^\bullet)f(t)=R_s^*(t)(\mathcal{T}_s f(t))$, $f\in V_s$.
   For $f,g\in V_s$, and using the fact that $\mathcal{T}_s$ is an isometric isomorphism, we get
   \begin{align*}\langle Lf,g\rangle_{H^s}=\langle(\mathcal{T}_s L)f,\mathcal{T}_s g\rangle_{H(\mathbb{T}^n,\ell_s^2)}
   &=\int_{\mathbb{T}^n}\langle R_s(t)(\mathcal{T}_s f(t)),\mathcal{T}_s g(t)\rangle_{\ell_s^2}\D t\\
   &=\int_{\mathbb{T}^n}\langle\mathcal{T}_s f(t),R_s^*(t)(\mathcal{T}_s g(t))\rangle_{\ell_s^2}\D t\\
   &=\langle\mathcal{T}_s f,(\mathcal{T}_sL^\bullet)g\rangle_{H(\mathbb{T}^n,\ell_s^2)}=\langle f,L^\bullet g\rangle_{H^s}.\end{align*}
Thus,  $L^*=L$ if and only if $R_s^*(t)=R_s(t)$ for a.e. $t\in\mathbb{T}^n$.

  Let $\sigma(L)\subseteq[A,B]$, i.e. for all $f\in V_s$, we have
   \begin{align}\label{3.13}\nonumber A\|f\|_{H^s}^2\leq\langle Lf,f\rangle_{H^s}&=\langle (\mathcal{T}_sL)f,\mathcal{T}_sf\rangle_{H(\mathbb{T}^n,\ell_s^2)}\\
   &=\int_{\mathbb{T}^n}\langle R_s(t)(\mathcal{T}_s f(t)),\mathcal{T}_s f(t)\rangle_{\ell_s^2}\D t\leq B\|f\|_{H^s}^2,
   \end{align}
   and the assertion holds. Suppose, now, that $\sigma(R_s(t))\subseteq[A,B]$ for a.e.\ $t\in\mathbb{T}^n$, \ie \, for $f\in V_s$
   $$A\|\mathcal{T}_s f(t)\|_{\ell_s^2}^2\leq\big\langle R_s(t)(\mathcal{T}_s f(t)),\mathcal{T}_s f(t)\big\rangle_{\ell_s^2}\leq B\|\mathcal{T}_s f(t)\|_{\ell_s^2}^2,\quad \mbox{for a.e.}\,\, t\in\mathbb{T}^n.$$
   Integrating over $\mathbb{T}^n$, we obtain \eqref{3.13}.

 Using the previous facts, if $L$ is unitary, then $\sigma(LL^*)=\sigma(L^*L)=\{1\}$, if and only if $\sigma(R_s(t)R_s^*(t))=\sigma(R_s^*(t)R_s(t))=\{1\}$ for a.e.\ $t\in\mathbb{T}^n$, \ie\, $R_s(t)$ is unitary for a.e.\ $t\in\mathbb{T}^n$.
   \end{proof}

 By studying the shift-preserving operator defined on $V_s$, some properties of the dimension function can be proved.
   \begin{theorem} Let $L:V_s\to V_s$ be shift-preserving operator and $\widetilde{V_s}=\overline{L(V_s)}$. Then, $\dim_{\widetilde{V_s}}(t)\leqslant\dim_{V_s}(t)$ for a.e.\ $t\in\mathbb{T}^n$.
   \end{theorem}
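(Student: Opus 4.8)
The plan is to transport the statement to the fibers over $\mathbb{T}^n$ via the range operator $R_s$ associated with $L$ by the previous theorems, and then to settle the inequality fiber by fiber through an elementary rank estimate. First I would verify that $\widetilde{V_s}=\overline{L(V_s)}$ is again a closed SI space: if $g=Lf$ with $f\in V_s$, then $T_kg=T_kLf=LT_kf\in L(V_s)$ because $T_kf\in V_s$, so $L(V_s)$ is translation invariant, and since each $T_k$ acts isometrically on $H^s$ the closure $\widetilde{V_s}$ remains SI. By Theorem \ref{V} there is then a measurable range function $\widetilde{J_s}$ with $\mathcal{T}_s\widetilde{V_s}=M_{\widetilde{J_s}}$, and the quantities to compare are $\dim_{\widetilde{V_s}}(t)=\dim\widetilde{J_s}(t)$ and $\dim_{V_s}(t)=\dim J_s(t)$.

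The heart of the argument is the fiberwise identification $\widetilde{J_s}(t)=\overline{R_s(t)\big(J_s(t)\big)}$ for a.e. $t\in\mathbb{T}^n$. Since $H^s$ is separable, $V_s$ admits an at most countable generating family $\{\varphi_i\}$, so that $J_s(t)=\overline{\spn}\{\mathcal{T}_s\varphi_i(t):i\}$ for a.e. $t$ by the last part of Theorem \ref{V}. The shifts of $\{L\varphi_i\}$ are then dense in $\widetilde{V_s}$, whence the same formula gives $\widetilde{J_s}(t)=\overline{\spn}\{\mathcal{T}_s(L\varphi_i)(t)\}$; applying $(\mathcal{T}_sL)\varphi_i(t)=R_s(t)(\mathcal{T}_s\varphi_i(t))$ from \eqref{3.6} this equals $\overline{\spn}\{R_s(t)(\mathcal{T}_s\varphi_i(t))\}$. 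Because $R_s(t)$ is bounded on $J_s(t)$, it carries the dense subspace $\spn\{\mathcal{T}_s\varphi_i(t)\}$ of $J_s(t)$ onto a dense subspace of $R_s(t)(J_s(t))$, and passing to closures yields exactly $\overline{R_s(t)(J_s(t))}$.

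With this identification the bound is purely fiberwise. For a.e. fixed $t$, if $\dim J_s(t)=+\infty$ the inequality is trivial; if $\dim J_s(t)=d<+\infty$, then $R_s(t)(J_s(t))$ is the image of a $d$-dimensional space under a linear map, hence has dimension at most $d$ and, being finite dimensional, is already closed, so $\dim\widetilde{J_s}(t)=\dim\overline{R_s(t)(J_s(t))}\le d=\dim J_s(t)$. Together with the two identities recalled above, this is the desired estimate $\dim_{\widetilde{V_s}}(t)\leqslant\dim_{V_s}(t)$ a.e.

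I expect the main obstacle to be the rigorous, measurable justification of the fiber formula $\widetilde{J_s}(t)=\overline{R_s(t)(J_s(t))}$: one must commute the norm-closure taken in $H(\mathbb{T}^n,\ell_s^2)$ with the pointwise closure in the fibers $\ell_s^2$, absorb the exceptional null sets coming from the countably many generators into a single null set, and confirm that the span characterization of range functions in Theorem \ref{V} applies to $\widetilde{V_s}$. Once this bookkeeping is done the rank estimate is immediate. I would also note the shortcut available here: since $L$ maps $V_s$ into $V_s$, one has $\widetilde{V_s}\subseteq V_s$ directly, forcing $\widetilde{J_s}(t)\subseteq J_s(t)$ a.e. and giving the inequality at once; the range-operator route above is the one that persists when $L$ is only assumed to map $V_s$ into $H^s$.
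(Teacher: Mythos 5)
Your argument is correct and is essentially the proof the paper intends: the theorem is stated here without proof (this is a survey, with the argument adapted from \cite{MB} and deferred to \cite{aap1}), and the standard proof is exactly your fiberwise identification $\widetilde{J_s}(t)=\overline{R_s(t)\big(J_s(t)\big)}$ via the countable generating family and \eqref{3.6}, followed by the elementary rank estimate on each fiber. Your closing observation is also apt: under the hypothesis as literally stated ($L:V_s\to V_s$), the inclusion $\widetilde{V_s}\subseteq V_s$ together with monotonicity of range functions yields the inequality at once, and the range-operator route you develop is what survives in the more general setting $L:V_s\to H^s$.
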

Two SI spaces can be mapped onto each other with an iso-
morphism commuting with shifts precisely when they have identical dimension
functions almost everywhere.

Finally, a result about
dual frames is obtained using the range operator approach.

The frame operator $F:S_s(\mathcal{A}_{I,s})\to S_s(\mathcal{A}_{I,s})$ is given by
  $$Ff=\sum_{(k,i)\in\mathbb{Z}^n\times I}\langle f,T_k\varphi_i\rangle_{H^s}T_k\varphi_i,$$
where the convergence is unconditional in $H^s$.

Results about dual frames are obtained
in \cite{RS}, with the approach of the mixed Gramian. For
a fuller treatment, we refer the reader to \cite{D}.


  \begin{theorem} For a frame $($a Riesz family$)$ $E_s(\mathcal{A}_{I,s})$ with constants $A$, $B$ there exists the dual frame $($dual Riesz family$)$ $E_s(\mathcal{B}_{I,s})$ with constants $B^{-1}$, $A^{-1}$, where $\mathcal{B}_{I,s}$ is given by $\mathcal{B}_{I,s}=\{\theta_i:\theta_i=F^{-1}\varphi_i, i\in I\}$. Moreover,
$\mathcal{T}_s\theta_i(t)=R_s^{-1}(t)\big(\mathcal{T}_s\varphi_i(t)\big)$ for a.e. $t\in\mathbb{T}^n$, $i\in I$.
  \end{theorem}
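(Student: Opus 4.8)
The plan is to realize the frame operator $F$ as a shift-preserving operator, transport everything to the fibers via $\mathcal{T}_s$, and recognize the associated range operator $R_s(t)$ as the fiberwise frame operator of $\{\mathcal{T}_s\varphi_i(t)\}$ in $J_s(t)$; then $\{\mathcal{T}_s\theta_i(t)\}$ is literally the canonical dual frame on each fiber, and Theorem \ref{frame} lifts the fiber bounds back to $S_s(\mathcal{A}_{I,s})$.

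First I would check that $F$ is shift-preserving. Using that each $T_k$ is unitary on $H^s$ and reindexing the defining sum, for every $m\in\Z^n$ one has $\langle T_m f,T_k\varphi_i\rangle_{H^s}=\langle f,T_{k-m}\varphi_i\rangle_{H^s}$, whence $F T_m=T_m F$. By the standard frame estimates $F$ is bounded, self-adjoint and positive with $\sigma(F)\subseteq[A,B]$ and $A>0$; hence $F$ is invertible on $V_s=S_s(\mathcal{A}_{I,s})$ and $\sigma(F^{-1})\subseteq[B^{-1},A^{-1}]$. Invoking the correspondence between shift-preserving operators and measurable range operators established above, I obtain $R_s$ with $(\mathcal{T}_s F)f(t)=R_s(t)(\mathcal{T}_s f(t))$, and the preceding theorem on self-adjoint shift-preserving operators gives that $R_s(t)$ is self-adjoint with $\sigma(R_s(t))\subseteq[A,B]$ for a.e.\ $t$. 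Since $A>0$, $R_s(t)$ is invertible a.e.\ with $\|R_s(t)^{-1}\|\le A^{-1}$, so $\esssup_t\|R_s(t)^{-1}\|<+\infty$.

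Applying the correspondence once more to the shift-preserving operator $F^{-1}$ (the inverse of a shift-preserving operator is shift-preserving, since $F^{-1}T_k=F^{-1}T_k F F^{-1}=T_k F^{-1}$), its range operator must be $R_s(t)^{-1}=:R_s^{-1}(t)$: indeed from $g=F^{-1}f$, i.e.\ $f=Fg$, we read off $\mathcal{T}_s g(t)=R_s(t)^{-1}\mathcal{T}_s f(t)$. Taking $f=\varphi_i$ yields $\mathcal{T}_s\theta_i(t)=R_s^{-1}(t)(\mathcal{T}_s\varphi_i(t))$, which is the ``Moreover'' assertion. Next I would identify $R_s(t)$ with the frame operator of the fiber system $\{\mathcal{T}_s\varphi_i(t)\}$ in $J_s(t)$; granting this, $\{R_s^{-1}(t)\mathcal{T}_s\varphi_i(t)\}=\{\mathcal{T}_s\theta_i(t)\}$ is precisely the canonical dual frame on $J_s(t)$, hence a frame for $J_s(t)$ with bounds $B^{-1},A^{-1}$ for a.e.\ $t\in\T^n$, and Theorem \ref{frame} upgrades this to $E_s(\mathcal{B}_{I,s})$ being a frame for $S_s(\mathcal{A}_{I,s})$ with bounds $B^{-1},A^{-1}$.

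I expect the main obstacle to be this last identification of $R_s(t)$ as the fiberwise frame operator, since it requires interchanging the (unconditionally convergent) series defining $F$ with the fiberization $\mathcal{T}_s$ and the integration over $\T^n$, and verifying the identity holds for a.e.\ $t$. Once it is in place, the duality (reconstruction) identities follow from $T_k\theta_i=T_kF^{-1}\varphi_i=F^{-1}T_k\varphi_i$ and the self-adjointness of $F^{-1}$: $\sum_{(k,i)}\langle f,T_k\theta_i\rangle_{H^s}T_k\varphi_i=\sum_{(k,i)}\langle F^{-1}f,T_k\varphi_i\rangle_{H^s}T_k\varphi_i=F(F^{-1}f)=f$, and symmetrically with the roles of $\varphi_i$ and $\theta_i$ exchanged. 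The Riesz-family case is handled along the same lines, the fiber systems now being Riesz bases, so that the dual is the unique biorthogonal Riesz basis with the reciprocal bounds $B^{-1},A^{-1}$.
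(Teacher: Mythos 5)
Your proposal is correct in substance, but it takes a genuinely different route from the paper for the frame-bound part. The paper never descends to the fibers: it verifies directly, using self-adjointness of $F^{-1}$ and the definition of $F$, that for every $f\in S_s(\mathcal{A}_{I,s})$
\begin{equation*}
\sum_{(k,i)\in\Z^n\times I}\big|\langle f,F^{-1}T_k\varphi_i\rangle_{H^s}\big|^2=\langle F^{-1}f,f\rangle_{H^s},
\end{equation*}
and then reads the bounds $B^{-1}$, $A^{-1}$ off the spectral inclusion $\sigma(F^{-1})\subseteq[B^{-1},A^{-1}]$; together with $F^{-1}T_k=T_kF^{-1}$ (which it proves essentially as you do) this already yields the dual frame and the reconstruction identity, with no interchange of the defining series with $\mathcal{T}_s$ ever needed. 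Your detour through the identification of $R_s(t)$ with the fiberwise frame operator of $\{\mathcal{T}_s\varphi_i(t)\}$ is precisely the soft spot you flag yourself: the identification is true and provable by a standard computation ($\langle f,T_k\varphi_i\rangle_{H^s}$ is the $k$-th Fourier coefficient of $t\mapsto\langle\mathcal{T}_sf(t),\mathcal{T}_s\varphi_i(t)\rangle_{\ell_s^2}$, so Parseval converts the sum over $k$ into the fiberwise expansion), but your write-up leaves it as an expectation, whereas the paper's direct argument simply sidesteps it. If you keep your route, two small additions are needed: to invoke Theorem \ref{frame} for $E_s(\mathcal{B}_{I,s})$ you must note that $S_s(\mathcal{B}_{I,s})=S_s(\mathcal{A}_{I,s})$, i.e.\ the range function is unchanged, which holds because $R_s^{-1}(t)$ is a bounded bijection of $J_s(t)$ onto itself a.e.; and Theorem \ref{frame} only gives the frame bounds, so duality still rests on the reconstruction identity you prove at the end. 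What your approach buys in exchange is the ``Moreover'' clause $\mathcal{T}_s\theta_i(t)=R_s^{-1}(t)\big(\mathcal{T}_s\varphi_i(t)\big)$, which you derive cleanly from the uniqueness of the range operator associated with the shift-preserving operator $F^{-1}$ --- a clause the paper asserts in the statement but whose proof it does not spell out.
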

 \begin{proof} For $f\in S_s(\mathcal{A}_{I,s})$ it follow that
    \begin{align*}\sum_{(k,i)\in\mathbb{Z}^n\times I}\big|\langle f,F^{-1}T_k\varphi_i\rangle_{H^s}\big|^2
    &=\sum_{(k,i)\in\mathbb{Z}^n\times I}\big|\langle F^{-1}f,T_k\varphi_i\rangle_{H^s}\big|^2\\
    &=\sum_{(k,i)\in\mathbb{Z}^n\times I}\langle F^{-1}f,T_k\varphi_i\rangle_{H^s}\overline{\langle F^{-1}f,T_k\varphi_i\rangle}_{H^s}\\
    &=\sum_{(k,i)\in\mathbb{Z}^n\times I}\big\langle\langle F^{-1}f,T_k\varphi_i\rangle_{H^s}T_k\varphi_i,F^{-1}f\big\rangle_{H^s}\\
    &=\bigg\langle\sum_{(k,i)\in\mathbb{Z}^n\times I}\langle F^{-1}f,T_k\varphi_i\rangle_{H^s}T_k\varphi_i,F^{-1}f\bigg\rangle_{H^s}\\
    &=\big\langle F(F^{-1}f),F^{-1}f\big\rangle_{H^s}=\langle F^{-1}f,f\rangle_{H^s}.
    \end{align*} Thus, we get
    \begin{equation}\label{5.5}B^{-1}\|f\|_{H^s}\leq\sum_{(k,i)\in\mathbb{Z}^n\times I}\big|\langle f,F^{-1}T_k\varphi_i\rangle_{H^s}\big|^2
    =\langle F^{-1}f,f\rangle_{H^s}\leq A^{-1}\|f\|_{H^s}.
    \end{equation} Consequently, $\{F^{-1}T_k\varphi_i:k\in\mathbb{Z}^n,i\in I\}$ is a dual frame of $E_s(\mathcal{A}_{I,s})$ with constants $B^{-1}$, $A^{-1}$. Furthermore, $F$ is a shift-preserving operator. Thus, for $g=Ff\in H^s$,
    we have $$F^{-1}T_kg=F^{-1}T_kFf=F^{-1}FT_kf=T_kf=T_kF^{-1}g,\quad k\in\mathbb{Z}^n.$$
    Hence, $F^{-1}$ is shift-preserving. Now, it follows that $E_s(\mathcal{B}_{I,s})$ is a dual frame of $E_s(\mathcal{A}_{I,s})$ with constants $B^{-1}$, $A^{-1}$. Moreover,
    $$\sum_{(k,i)\in\mathbb{Z}^n\times I}\langle f,T_k\theta_i\rangle_{H^s} T_k\varphi_i=\sum_{(k,i)\in\mathbb{Z}^n\times I}\langle f,T_k\varphi_i\rangle_{H^s} T_k\theta_i=f\in S_s(\mathcal{A}_{I,s})$$
    holds with the unconditional convergence in $H^s$.
    \end{proof}
  \begin{remark} In the case of Riesz family, the biorthogonality relation
\[\langle T_k\varphi_i,T_\ell\theta_j\rangle_{H^s}=\delta_{k,\ell}\delta_{i,j}, \quad k,\ell\in\mathbb{Z}^n, \;i, j\in I,\] where $\delta_{i,j}=\begin{cases}1, & i=j,\\ 0, & i\neq j,\end{cases}$ is satisfied.
  \end{remark}

\section*{Acknowledgment}

This research was supported by the Science Fund of the Republic of Serbia,  $\#$GRANT No $2727$,{\it Global and local analysis of operators and distributions} - GOALS. The first
author is supported by the Ministry of Science, Technological Development and Innovation
of the Republic of Serbia Grant No. 451-03-66/2024-03/200122. The second author is supported by the Ministry of Science, Technological Development and
Innovation of the Republic of Serbia (Grants No. 451-03-65/2024-03/200122).

\end{document}